\definecolor{document_fontcolor}{rgb}{0, 0, 0}
\numberwithin{equation}{section}
\theoremstyle{plain}
\newtheorem{thm}{\protect\theoremname}[section]
\theoremstyle{plain}
\newtheorem{lem}[thm]{\protect\lemmaname}
\newenvironment{proof}[1][\protect\proofname]{\par
\normalfont\topsep6\p@\@plus6\p@\relax
\trivlist
\itemindent\parindent
\item[\hskip\labelsep\scshape #1]\ignorespaces
}{%
\endtrivlist\@endpefalse
}
\providecommand{\proofname}{Proof}
\theoremstyle{plain}
\newtheorem{prop}[thm]{\protect\propositionname}
\providecommand{\lemmaname}{Lemma}
\providecommand{\propositionname}{Proposition}
\providecommand{\theoremname}{Theorem}
\begin{document}

\title{Kolmogorov-Arnold-Moser Theorem}

\author{Othmane Islah}

\maketitle
This article is about the proof of the celebrated KAM theorem as
sketched out in \cite{KOL} Kolmogorov's original presentation to the ICM. The proof\footnote{Cedric Villani brought to my attention that Chierchia has also done this in his article  } presented here  has been detailed as an effort to clarify if  Kolmogorov's argument can be made rigorous. This is a legitimate question because the published proof \cite{ARN} a few years later by his then student, the eminent mathematician V.I Arnold, did not follow the same plan

\section{Notations and preliminaries}
\begin{itemize}
\item Let U be an open set of $\mathbb{R^{\mathrm{d}}}$ containing the
origin and $\mathbf{\mathbb{T^{\mathrm{d}}}}$ the d-dimensional torus.
\item $|.|$ is the sup norm in $\mathbb{R^{\mathrm{d}}}$ , $\mathbb{C^{\mathrm{d}}}$
, $\mathcal{\mathcal{M_{\mathrm{d}}}\left(\mathbb{C}\right)}$ ~and
in spaces of analytical functions defined on a compact set. 
\item $\mathcal{H_{\mathrm{\rho,\Delta}}}$ is the set of analytical functions
defined on $\mathrm{U}\times\mathbb{\mathbb{T^{\mathrm{d}}}}$~that
have a holomorphic extension to $\mathrm{A_{\rho,\Delta}}$ : 
\[
\mathrm{A_{\rho,\Delta}}=\left\{ (x,y)\in\mathbb{C}^{\mathrm{2d}}/\,|x|\leq\rho\textrm{,|}\mathrm{Im}(y)|\leq\Delta\right\} 
\]

\item $\mathcal{H_{\mathrm{\Delta}}}$ is the set of analytical functions
of $\mathbb{\mathbb{T^{\mathrm{d}}}}$~that have a holomorphic extension
to $\mathrm{A_{\Delta}}=\left\{ x\in\mathbb{C}^{\mathrm{d}}/,\,|\mathrm{Im}(x)|\leq\Delta\right\} $
\item $|.|_{\rho,\Delta}$ and $|.|_{\Delta}$are the sup norms in $\mathcal{H_{\mathrm{\rho,\Delta}}}$,
$\mathcal{H_{\mathrm{\Delta}}}$ on the compacts $\mathrm{A_{\rho,\Delta}}$,
$\mathrm{A_{\Delta}}$.
\item $\mathcal{H_{\mathrm{\Delta}}^{\mathrm{0}}}$ is the subset of elements~$f$
~of $\mathcal{H_{\mathrm{\Delta}}}$ such that ${\displaystyle \underset{\mathrm{\mathbb{T}^{d}}}{\int}f(\theta)\mathrm{d}\theta}=0$.
\item ${\displaystyle DC(c,\tau)=\left\{ \omega\in\mathbb{R}^{d}\,\,/\forall k\in\mathbb{Z^{\mathrm{d}}}\backslash\{0\},\,\mid\omega.k\mid\geq\frac{c}{|k|^{\tau}}\right\} }\,$
is the set of diophantine vectors with parameters $c$ and $\tau>d-1$. 
\end{itemize}

\section{The Classical Kolmogorov-Arnold-Moser Theorem}

For $\epsilon>0$ and $f_{0},\,f_{1}\in\mathcal{H_{\mathrm{\rho,\Delta}}}$,
let $H_{\epsilon}(r,\theta)=f_{0}(r)+\epsilon f_{1}(r,\theta)$ ~be
a perturbation of the integrable system given by $H_{0}$ for which
the tori $\{r_{0}\}\times\mathbf{\mathbb{T^{\mathrm{d}}}}$, $r_{0}\in\mathbb{R}{}^{d}$
are invariant.

The aim of KAM theory is to establish the survival of most of the
invariant tori after perturbation, with of course a small deformation
compared to the flat invariant tori of $H_{0}$.

The theory assert that under various non degeneracy or 'twist conditions'
on $f_{0}$ most of the tori survive after perturbation, in the sense
that the relative measure inside $B_{\rho}\times\mathbb{T}^{d}$ of
the invariant tori of $H_{\epsilon}$ tends to $1$ as $\epsilon$
tends to $0$.

We will present the oldest result in that direction, namely Kolmogorov's
theorem that a Diophantine invariant torus of a completely integrable
analytic Hamiltonian with a non-degenrate Hessian survives under analytic
perturbations.

To fix notations, we will be interested in the invariant torus $\{0\}\times\mathbf{\mathbb{T^{\mathrm{d}}}}$
that has frequency 
\[
{\displaystyle \omega:=\frac{\partial f_{0}}{\partial r}(0)}.
\]

We will always assume that $\omega\in{\rm DC}(c,\tau)$.

We let 
\[
{\displaystyle \hat{S}:=\frac{\partial^{2}f_{0}}{\partial r^{2}}(0)}.
\]

We say that $f_{0}$ has the Kolmogorov twist condition on the torus
$\{0\}\times\mathbf{\mathbb{T^{\mathrm{d}}}}$ if $\hat{S}$ is an
invertible matrix. In the sequel, we will always assume that this
is the case.

The following theorem asserts that under these two conditions, for
any small $\epsilon>0,$ $H_{\epsilon}$ has an invariant torus of
frequency $\omega$ that is a small perturbation of the invariant
torus $\{0\}\times\mathbf{\mathbb{T^{\mathrm{d}}}}\,$ of the original
system. We say that the torus with frequency $\omega$ is persistent
after small perturbation.
\begin{thm}
\label{thm:Kolmogorov-Arnold-Moser}\textbf{\textup{Kolmogorov}}
\end{thm}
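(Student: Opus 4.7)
The plan is to construct, by a Newton-type iteration, an analytic symplectic change of variables $\Psi_\infty$ on a complex neighbourhood of $\{0\}\times\mathbb{T}^d$ that puts $H_\epsilon$ into \emph{Kolmogorov normal form}
\[
H_\epsilon\circ\Psi_\infty(R,\Theta)\;=\;\text{const}+\omega\cdot R+\tfrac12\langle \tilde S(\Theta)R,R\rangle+O(|R|^3),
\]
for which the torus $\{R=0\}$ is manifestly invariant and carries the linear flow $\dot\Theta=\omega$. I would build $\Psi_\infty$ as the uniform limit of an infinite composition $\phi_0\circ\phi_1\circ\cdots$ of near-identity canonical transformations. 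The characteristic feature of Kolmogorov's approach, as opposed to a naive perturbation series, is that the frequency $\omega$ is \emph{kept fixed} throughout the iteration; a constant vector counterterm $\beta\in\mathbb{R}^d$ is introduced at each step to adjust the location of the tentative torus, and it is here that the twist hypothesis (invertibility of $\hat S$) enters crucially.

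At a generic step the Hamiltonian takes the form
\[
H(r,\theta)\;=\;\text{const}+\omega\cdot r+\tfrac12\langle \hat S\,r,r\rangle+a(\theta)+b(\theta)\cdot r+h(r,\theta),
\]
with $a\in\mathcal H_\Delta$ and $b\in(\mathcal H_\Delta)^d$ of small size $\epsilon$ and a harmless remainder $h=O(|r|^{2}\epsilon)+O(|r|^{3})$. I would look for a canonical transformation $\phi$ generated by
\[
\Phi(R,\theta)=R\cdot\theta+X(\theta)+Y(\theta)\cdot R+\beta\cdot\theta,\qquad X\in\mathcal H_\Delta^{0},\ Y\in(\mathcal H_\Delta^{0})^d,\ \beta\in\mathbb{R}^d,
\]
and matching $H\circ\phi$ to the desired normal form to first order produces the homological equations
\[
\omega\cdot\partial_\theta X=\bar a-a,\qquad \omega\cdot\partial_\theta Y=\bar b-b-\hat S\,\beta,
\]
where $\bar a,\bar b$ are the $\theta$-averages. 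These are solved Fourier-mode by Fourier-mode by division by $i\omega\cdot k$: the hypothesis $\omega\in\mathrm{DC}(c,\tau)$ makes the division admissible at the price of a polynomial loss of derivatives, while the invertibility of $\hat S$ permits the unique choice $\beta=\hat S^{-1}\bar b$ that kills the mean of $\bar b-b-\hat S\beta$ and renders the second equation solvable.

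The heart of the argument lies in the quantitative book-keeping. Paley--Wiener / Cauchy estimates applied to the Fourier series of $X,Y$ yield, on the shrunken domain $\mathrm{A}_{\rho-\sigma,\Delta-\delta}$, bounds of the shape
\[
|X|_{\Delta-\delta},\ |Y|_{\Delta-\delta}\;\lesssim\;\frac{\epsilon}{c\,\delta^{\tau+d}},
\]
and Taylor-expanding the remainder of $H\circ\phi$ gives a new perturbation size $\epsilon'\lesssim C\,\epsilon^{2}/(c^{2}\delta^{\nu}\sigma^{\mu})$, with exponents $\nu,\mu$ depending only on $d$ and $\tau$. Choosing geometrically decreasing losses $\delta_n,\sigma_n\sim 2^{-n}\delta_0$ preserves a nontrivial final domain $\mathrm{A}_{\rho_\infty,\Delta_\infty}$, and the quadratic gain $\epsilon_{n+1}\lesssim C\,4^{n\nu}\epsilon_n^{2}$ defeats the exponentially growing constants provided $\epsilon_0=\epsilon$ is small enough; this delicate contest between geometric loss of analyticity and superlinear gain in smallness is what I expect to be the main technical obstacle. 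Once the estimates are in place, $\Psi_n=\phi_0\circ\cdots\circ\phi_n$ converges in $|\cdot|_{\rho_\infty,\Delta_\infty}$ to an analytic symplectic map, the limit Hamiltonian is in Kolmogorov normal form, and $\Psi_\infty(\{0\}\times\mathbb{T}^d)$ is the persistent invariant torus with frequency $\omega$, lying $O(\epsilon)$-close to the unperturbed one.
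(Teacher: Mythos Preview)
Your overall plan is exactly Kolmogorov's and coincides with the paper's proof: the same generating function $\Phi=R\cdot\theta+X(\theta)+Y(\theta)\cdot R+\beta\cdot\theta$ (the paper writes it as $u+\alpha\cdot\theta+{}^t(\theta+v)\cdot R$), the same quadratic iteration with geometrically shrinking analyticity strips, and the same convergence balance. So the architecture is right.

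There is, however, a concrete error in your linearised step that, taken literally, destroys the quadratic convergence. When you substitute $r=R+\partial_\theta X+\beta+{}^t(\partial_\theta Y)R$ into the quadratic part $\tfrac12\langle\hat S r,r\rangle$, the cross term $\langle\hat S(\partial_\theta X+\beta),R\rangle$ contributes to the new linear-in-$R$ coefficient. Your homological equation for $Y$ therefore must read
\[
\mathcal L_\omega Y \;=\; -\,b+\bar b-\hat S\,\partial_\theta X,
\qquad
\hat S\,\beta=-\,\bar b,
\]
(the paper's equations (\ref{eq:system of fundamental equations}) have precisely this coupling, with the full $S(\theta)$ in place of $\hat S$). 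Your stated equation $\mathcal L_\omega Y=\bar b-b-\hat S\beta$ omits the term $\hat S\,\partial_\theta X$; without it the post-step linear coefficient still contains $\hat S\,\partial_\theta X=O(\epsilon)$, so the new error is $O(\epsilon)$ rather than $O(\epsilon^2)$ and the Newton scheme stalls. Moreover, with your formula for $\beta$ the right-hand side $\bar b-b-\hat S\beta$ has average $-\bar b\neq0$, so the equation is not even solvable in $\mathcal H^0_{\Delta-\delta}$. Once you restore the coupling term and fix the sign of $\beta$, the remainder of your outline (small-divisor estimate $\lesssim\epsilon/(c\,\delta^{\tau+d})$, quadratic recursion $\epsilon_{n+1}\lesssim C\epsilon_n^2/\delta_n^{\nu}$, geometric $\delta_n$, uniform convergence of $\Psi_n$) goes through exactly as in the paper.

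A minor difference worth noting: the paper keeps the full angle-dependent Hessian $S(\theta)$ in the ``normal'' part and accordingly uses $\int S(\theta)\,d\theta$ (rather than your fixed $\hat S$) to solve for $\alpha$; it then tracks $|S_n-\hat S|$ through the iteration to ensure invertibility persists. Your choice to freeze $\hat S$ and push $S(\theta)-\hat S$ into the remainder also works, but you should check that this discrepancy stays $O(\epsilon_0)$ along the iteration so that the remainder term it generates is genuinely $O(|r|^2\epsilon)$ at every step.
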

\noindent There exists $\epsilon_{0}(\rho,\Delta,c,\tau,\hat{S},|f_{0}|_{\Delta},|f_{1}|_{\rho,\Delta})$
such that if $\epsilon<\epsilon_{0}$ then there exists a real analytic
symplectic diffeomorphism $\phi_{\epsilon}:A_{\rho/3,\Delta/3}\rightarrow{\displaystyle \mathrm{V}\subset\mathrm{A_{\rho,\Delta}}}$
such that

\begin{equation}
H_{\epsilon}\circ\phi_{\epsilon}(r,\theta)=\omega.r+g(r,\theta)\label{eq:equation change of variable}
\end{equation}
with $g=O\left(r^{2}\right)$.

\noindent ~Also : 
\[
{\displaystyle \mid\phi_{\epsilon}-\mathrm{Id}\mid}_{\frac{\rho}{3},\frac{\Delta}{3}}\leq C\epsilon
\]
with $C:=C(\rho,\Delta,c,\tau,\hat{S},|f_{0}|_{\Delta},|f_{1}|_{\rho,\Delta})$.

\paragraph{Remarks}
\begin{enumerate}
\item The existence of a symplectic diffeormorphism close to the identity,
means that the invariant torus of the perturbed system is at a $O(\epsilon)$
distance to the invariant torus of the initial integrable system.
Moreover, the quasi-periodic orbits of the perturbed system remain
close to the quasi-periodic orbits of the integrable system at every
time.
\item We could have proven an apparently stronger result without additionnal
effort where the domain of $\phi_{\epsilon}$ could be set to any
$A_{\tilde{\rho},\tilde{\Delta}}$ with $0<\tilde{\rho}<\rho$ and
$0<\tilde{\Delta}<\Delta$.
\item The statement of the theorem remains of course valid for any invariant
torus of the integrable system as long as the two conditions - $\frac{\partial h_{0}}{\partial r}(r_{0})$
diophantian and $\frac{\partial^{2}h_{0}}{\partial r^{2}}(r_{0})$
invertible - are satisfied.
\item It is a known fact that the set of diophantian frequencies has full
Lebesgue-measure in any ball in $\mathbb{R}^{d}$.
\end{enumerate}
The idea of the proof is to construct iteratively the change of coordinates
by solving a linearised form of the equation (\ref{eq:equation change of variable}).
At each step, we will cancel successively the contribution of the
angular variable to the order $\epsilon^{2}$, $\epsilon^{4}$,...,$\epsilon^{2^{n}}$
and at the same time ensure that the conditions (unchanged diophantian
frequency, invertible Hessian) for the first and second order term
in $r$ remain valid. This is a quadratic scheme for which the fast
convergence offsets the loss of analyticity due to the small divisors
when we solve the linearised form of the equation.\\
The linearised equation will lead to the following linear partial
differential equation :

\[
\mathcal{L_{\omega}}f=g\,\,\textrm{with}\,\,g\in\mathcal{H_{\mathrm{\Delta}}^{\mathrm{0}}},\,\,f\in\mathcal{H_{\mathrm{\Delta-\delta}}^{\mathrm{0}}}\,,\,0<\delta<\Delta
\]
Where: 
\[
\mathcal{L_{\omega}}=\stackrel[i=1]{d}{\sum}\omega_{i}\frac{\partial}{\partial\theta_{i}}
\]
 . \\
The following fundamental lemma shows that this equation has a unique
solution if $\omega$ is diophantian and provides an estimate on the
sup norm of the solution.
\begin{lem}
\label{lem:Fundamental-lemma}\textbf{\textup{Fundamental lemma}}

If~${\displaystyle \omega\in DC(c,\tau)}$ then: 
\[
{\displaystyle \forall g\in\mathcal{H_{\mathrm{\Delta}}^{\mathrm{0}}}\,\,\forall\delta>0,\,\,\delta<\Delta,\,\,\exists!\,f\in\mathcal{H_{\mathrm{\Delta-\delta}}^{\mathrm{0}}\,\,}}
\]
 such that 
\begin{equation}
{\displaystyle \mathcal{L_{\omega}}f=g}\label{eq:cohomol}
\end{equation}
. 

Moreover ${\displaystyle \exists\,C(\tau,\,d)>0}$ such that : 
\begin{equation}
\mid f\,\mid_{\Delta-\delta}\leq\frac{C(\tau,\,d)}{\delta^{d+\tau}}\,\mid g\,\mid_{\Delta}\label{eq:estimate sol cohom}
\end{equation}
 .\end{lem}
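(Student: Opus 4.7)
The approach is the standard Fourier-series inversion of $\mathcal{L}_\omega$. Since $g \in \mathcal{H}_\Delta^0$, expand it as a Fourier series $g(\theta) = \sum_{k \in \mathbb{Z}^d \setminus \{0\}} \hat{g}(k)\, e^{i k \cdot \theta}$. Because $\mathcal{L}_\omega$ acts diagonally on Fourier modes via $\mathcal{L}_\omega e^{i k \cdot \theta} = i(\omega \cdot k)\, e^{i k \cdot \theta}$, and because $\omega \cdot k \neq 0$ for every $k \neq 0$ by the Diophantine condition, the only possible solution in $\mathcal{H}_{\Delta-\delta}^0$ is
\[
f(\theta) = \sum_{k \in \mathbb{Z}^d \setminus \{0\}} \frac{\hat{g}(k)}{i \,\omega \cdot k}\, e^{i k \cdot \theta}.
\]
This formula immediately settles uniqueness: any $f \in \mathcal{H}_{\Delta-\delta}^0$ with $\mathcal{L}_\omega f = 0$ must have all Fourier coefficients vanish (the zero-th coefficient because $f$ has zero mean, the others by $\omega \cdot k \neq 0$), hence $f \equiv 0$.

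For existence and the quantitative estimate, the two inputs to combine are the exponential decay of Fourier coefficients of $g$ and the polynomial growth of the small-divisor factor. First, I would show by shifting the contour of integration in each angular variable by $\pm \Delta\, \mathrm{sign}(k_j)$ (which is legitimate by holomorphy on $A_\Delta$ and periodicity) that
\[
|\hat{g}(k)| \leq |g|_\Delta\, e^{-|k|\Delta},
\]
where $|k|$ is the chosen norm on $\mathbb{Z}^d$. Combined with the Diophantine bound $|\omega \cdot k| \geq c/|k|^\tau$, this gives $\bigl|\hat{g}(k)/(\omega \cdot k)\bigr| \leq c^{-1} |k|^\tau e^{-|k|\Delta} |g|_\Delta$.

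Plugging into the series defining $f$ and bounding $|e^{ik\cdot\theta}| \leq e^{|k|(\Delta-\delta)}$ on $A_{\Delta-\delta}$ yields
\[
|f|_{\Delta-\delta} \leq \frac{|g|_\Delta}{c} \sum_{k \in \mathbb{Z}^d \setminus \{0\}} |k|^\tau e^{-|k|\delta}.
\]
The normal convergence of this majorant for every $\delta > 0$ is what justifies that the formal sum actually defines a holomorphic function on $A_{\Delta-\delta}$, so existence follows from the same estimate. The remaining step is to bound the sum by $C(\tau,d)/\delta^{d+\tau}$: group the lattice points by $|k| = n$, noting that the number of such points is $O(n^{d-1})$, and estimate
\[
\sum_{k \neq 0} |k|^\tau e^{-|k|\delta} \leq C_d \sum_{n \geq 1} n^{\tau+d-1} e^{-n\delta},
\]
then compare this last sum to the integral $\int_0^\infty t^{\tau+d-1} e^{-t\delta}\,\mathrm{d}t = \Gamma(\tau+d)\,\delta^{-(\tau+d)}$.

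The only genuinely delicate point is the exponential decay of $\hat{g}(k)$: one has to check that contour shifts in the multi-dimensional integral defining $\hat{g}(k)$ are legal and that the resulting bound is uniform in $k$ with the sharp exponent $\Delta$. Everything else is bookkeeping: the Diophantine estimate is applied termwise, the convergence of the majorant is elementary, and the comparison of the resulting discrete sum with a Gamma integral produces the exponent $d+\tau$ in $\delta^{-(d+\tau)}$ with a constant $C(\tau,d)$ depending only on $\tau$ and $d$, as required.
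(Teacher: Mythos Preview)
Your proof is correct and follows essentially the same route as the paper's: Fourier-invert $\mathcal{L}_\omega$, combine the Diophantine lower bound on $|\omega\cdot k|$ with the exponential decay $|\hat g(k)|\le |g|_\Delta e^{-|k|\Delta}$, and sum the resulting majorant. The only cosmetic difference is in the final summation step---you group lattice points into shells of size $O(n^{d-1})$ and compare to a one-dimensional Gamma integral, whereas the paper compares the full sum directly to $\int_{\mathbb{R}^d}|x|^\tau e^{-|x|}\,dx$ after rescaling---but both yield the same $\delta^{-(d+\tau)}$ bound.
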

\begin{proof}
If we take the Fourier expansions of $f$ and $g$ and substitute
in the equation, we obtain :
\[
\forall\,k\in\mathbb{Z}^{d}\backslash\{0\}\,\,(\omega.k).f_{k}=g_{k}
\]
The unicity follows from this.\\
 Given the analyticity of $g$, we have that :

\begin{equation}
\forall\,k\in\mathbb{Z}^{d}\backslash\{0\}\,\,\mid g_{k}\mid\leq\mid g\mid_{\triangle}\exp(-\Delta.\mid k\mid)\label{eq:1}
\end{equation}
By setting $\forall z=x+iy\in\mathrm{A_{\Delta-\delta}}$ 
\[
f\,(x+iy)=\sum_{k\in\mathbb{Z}^{d}\backslash\{0\}}\,\frac{g_{k}}{\omega.k}\,e^{ik.z}
\]
We see that the series is convergent because get from (\ref{eq:1})
and the diophantine condition:
\[
\forall k\in\mathbb{Z}^{d}\backslash\{0\}\,\,\forall z\in\mathrm{A_{\Delta-\delta}}\,\,\,\biggl|\frac{g_{k}}{\omega.k}\,e^{ik.z}\biggr|\leq1/c.\bigl|k\bigr|^{\tau}\exp(-\delta\bigl|k\bigr|)\mid g\mid_{\triangle}
\]
Therefore $f\in\mathcal{H_{\mathrm{\Delta-\delta}}^{\mathrm{0}}}$
and we have :
\[
\mid f\,\mid_{\Delta-\delta}\leq\mid g\mid_{\triangle}\sum_{k\in\mathbb{Z}^{d}\backslash\{0\}}1/c.\bigl|k\bigr|^{\tau}\exp(-\delta\bigl|k\bigr|)\leq\frac{\mid g\mid_{\triangle}}{c\,\delta^{\tau+d}}\intop_{\mathbb{R}^{d}}|x|^{\tau}e^{-|x|}dx
\]

\end{proof}

\section{Analysis of the linearised equation~}

We will show in this section how to solve the linearised problem by
proving proposition \ref{prop:Linearised} but we will carry out some
ad hoc calculations to introduce the end result in a more natural
way.

As indicated in Kolmogorov's article, for the construction of the
analytical symplectic diffeomorphism, we will introduce the following
generating function:

\[
A(\theta,\,R)=\mathrm{u}(\theta)+\alpha.\theta+^{t}(\theta+\mathrm{v(\theta)).}R
\]

where $\mathrm{u}:\,\mathrm{\mathbb{T}^{d}\rightarrow\mathbb{R}}$
and $\mathrm{v}:\,\mathrm{\mathbb{T}^{d}\rightarrow\mathbb{R}^{d}}$
are analytical, $R\in\mathbb{R}^{d}$, and $\alpha\in\mathbb{R}^{d}$
is a constant.

We are seeking a canonical transformation ${\displaystyle \phi:\,(r,\,\theta)\rightarrow(R\,,\,\varphi)}$
with a domain to be determined such that ${\displaystyle \frac{\partial A}{\partial R}=\varphi}$
and ${\displaystyle \frac{\partial A}{\partial\theta}=r}$ so :

\begin{align}
\varphi= & \theta+\mathrm{v(}\theta)\label{eq:symplectic map}\\
r= & \frac{\partial\mathrm{u}}{\partial\theta}+\alpha+R+^{t}d\mathrm{v}.R\label{eq:symplectic map2}
\end{align}

Given that we are looking for a diffeomorphism close to the identity
at the order $\epsilon$, $\mathrm{u}$,$\mathrm{v}$ and $\alpha$
will be of the order $\epsilon$ as will be seen below.

We will write the perturbed hamiltonian $H_{\epsilon}$ in a different
form to distinguish the linear and quadratic part in $r$ from the
terms of order $\epsilon$ and the terms of order higher than 3 in
$r$:

\begin{equation}
H_{\epsilon}(r\mathrm{,}\theta)=m+{}^{t}\omega.r\,+\frac{1}{2}\,^{t}r.S(\theta).r+\epsilon\,h(\theta,r)+g(\theta,r)\label{eq:ham}
\end{equation}

Where $m$ is a constant, $\omega$ is diophantian, $S(\theta)$ a
symmetric matrix, $h,\,g\,\in\mathcal{H_{\mathrm{\rho,\Delta}}}$
and $g(\theta,r)=O\left(r^{3}\right)$.

Additionnally, we will assume that we are given an invertible symmetric
matrix $\hat{S}$ such that :
\begin{equation}
\begin{cases}
\exists\,\eta>0 & B(\hat{S},\,\eta)\subset Gl_{n}(\mathbb{C})\\
\forall\,\theta\in\mathbb{T}{}^{d},\, & S(\theta)\in B(\hat{S},\,\eta)
\end{cases}\label{eq:non degen neighboor}
\end{equation}
With~$B(\hat{S},\,\eta)$, the ball of center $\hat{S}$ abd radius
$\eta$.

Also, we set $\Delta R=\frac{\partial\mathrm{u}}{\partial\theta}+\alpha+^{t}d\mathrm{v}.R$
and $R_{t}=R+t\,\Delta R$ for $0\leq t\leq1$\@. We will assume
in the remainder of this section that $\Delta R=O\left(\epsilon\right)$
.

First we notice that :

\begin{equation}
\epsilon\,h(R+\Delta R,\,\theta)=\epsilon\,h(R,\,\theta)+\epsilon\,\intop_{0}^{\,\,1}t\,\,\frac{\partial h}{\partial R}(R_{t}\,,\theta)\,\mathrm{dt}\,\Delta R\label{eq: epsilon term}
\end{equation}

Since $\Delta R=O\left(\epsilon\right)$ , the second term on the
rhs is $O\left(\epsilon^{2}\right)$.

Let's expand $h$ as:

\[
h(R,\,\theta)=a(\theta)+^{t}b(\theta).R+\frac{1}{2}\,^{t}R.c(\theta).R+\sum_{|k|\geq3}\,h_{k}(\theta)\,R^{k}
\]

By Taylor formula applied to $g$ we have :

\begin{equation}
g\,(R+\Delta R,\,\theta)=g(R,\,\theta)\,+^{t}\Delta R\,.\frac{\partial g}{\partial R}(R,\theta)+\frac{1}{2}\,^{t}\Delta R\left(\intop_{0}^{\,\,1}\,t^{2}\,\frac{\partial^{2}g}{\partial R^{2}}(R_{t}\,,\theta)\,\mathrm{dt}\right)\Delta R\label{eq:order 3 term}
\end{equation}

On the rhs,the first term is $O\left(R^{3}\right)$, the second one
is $O\left(R^{2}\right)$ and the last term is $O\left(\epsilon^{2}\right)$
because $\Delta R=O\left(\epsilon\right)$ .

Also if we write $g(R,\theta)=\sum_{|k|\geq3}\,g_{k}(\theta)\,R^{k}$,
we have that :

\begin{equation}
^{t}\Delta R\,.\frac{\partial g}{\partial R}(R,\theta)=\sum_{k=1}^{d}\,^{t}R.Q_{k}(\theta).R.(\Delta\tilde{R}+^{t}\mathrm{dv}.R)_{k}+^{t}\Delta R.\frac{\partial}{\partial R}\left(\sum_{|k|\geq4}\,g_{k}(\theta)\,R^{k}\right)\label{eq:quad form order 3 term}
\end{equation}

where $\forall\,i,\,j,k$: 
\begin{eqnarray*}
Q_{k}(\theta) & = & g_{kij}(\theta)=\frac{1}{6}\frac{\partial g}{\partial R_{k}\partial R_{i}\partial R_{j}}(0,\,\theta)\\
\Delta R & = & \Delta\tilde{R}+^{t}\mathrm{dv}.R
\end{eqnarray*}

Finally for the quadratic form $S$ , we have : 
\begin{equation}
\frac{1}{2}\,^{t}(R+\Delta R).S(\theta).(R+\Delta R)=\frac{1}{2}\,^{t}R.S(\theta).R\,+\,^{t}R.S(\theta).\Delta R+\,\frac{1}{2}\,^{t}\Delta R.S(\theta).\Delta R\label{eq: quad form}
\end{equation}

If we set $\bar{H}(R,\theta)=H_{\epsilon}(R+\Delta R,\theta)$, we
then collect the terms up to the first order in $R$ and $\epsilon$
to obtain :

\begin{eqnarray*}
\bar{H}(R,\theta) & = & m+^{t}\omega.\alpha+^{t}\omega.R+\mathcal{L_{\omega}}u(\theta)+\epsilon a(\theta)+^{t}R.\left(\mathrm{dv}(\theta).\omega+S(\theta).\left(\frac{\partial u}{\partial\theta}+\alpha\right)+\epsilon\,b(\theta)\right)+\\
 &  & O\left(\epsilon^{2}\right)+O\left(R^{2}\right)
\end{eqnarray*}

In order to cancel up to the first order the dependence on $\theta$,
we must solve the following equations :

\begin{equation}
\begin{cases}
\mathcal{L_{\omega}}u= & -\epsilon a(\theta)+\epsilon\intop_{\mathrm{T^{d}}}a(\theta)\mathrm{d}\theta\\
\mathcal{L_{\omega}}\mathrm{v}= & -S(\theta).\left(\frac{\partial u}{\partial\theta}+\alpha\right)-\epsilon\,b(\theta)\\
\left(\intop_{\mathrm{T^{d}}}S(\theta)\mathrm{d}\theta\right)\alpha & =-\intop_{\mathrm{T^{d}}}\left(S(\theta)\frac{\partial u}{\partial\theta}+\epsilon\,b(\theta)\right)\mathrm{d}\theta
\end{cases}\label{eq:system of fundamental equations}
\end{equation}

where $\mathcal{L_{\omega}}\mathrm{v}=^{t}(\mathcal{L_{\omega}}\mathrm{v}_{1},...,\mathcal{L_{\omega}}\mathrm{v}_{d})$
.

The third equation is to ensure that the rhs of the second equation
has a zero average, and has a unique solution because $\intop_{\mathrm{T^{d}}}S(\theta)\mathrm{d}\theta\in B(\hat{S},\,\eta)$,
thus invertible. The first two equations have then a unique solution
by the fundamental lemma (\ref{lem:Fundamental-lemma}) and can be
defined on $\mathrm{A_{\Delta-\delta}}$ for any $0<\delta<\Delta$
. Also, the map defined by equation (\ref{eq:symplectic map}) is
injective on $\mathrm{A_{\Delta-\delta}}$ , if ~$\mid\mathrm{dv}\mid_{\Delta-\delta}<1$
and it defines then a diffeomorphism on its image. Under this condition,
we obtain a symplectic diffeomorphism $(R,\varphi)=\phi(r,\theta)$
from $\mathrm{A_{\rho,\Delta-\delta}}$ onto its image given by :
\begin{equation}
\begin{cases}
\varphi= & \theta+\mathrm{v}(\theta)\\
R\,= & \left(\,I+^{t}\mathrm{dv}(\theta)\,\right)^{-1}.\left(\,r-\alpha-\frac{\partial u}{\partial\theta}\right)
\end{cases}\label{eq:symplectic diffeomorphism}
\end{equation}

Where $u$, $\mathrm{v}$ and $\alpha$ are solutions of the system
of equations(\ref{eq:system of fundamental equations}).

Then for $(R,\varphi)\in\phi(\mathrm{A_{\rho,\Delta-\delta}})$, we
have $\theta=(I+\mathrm{v})^{-1}(\varphi)$. We subsitute this expression
of $\theta$, in equations (\ref{eq: epsilon term}) to (\ref{eq: quad form})
to obtain the following
\begin{prop}
\label{prop:Linearised}Let $H_{\epsilon}\in\mathcal{H}_{\rho,\Delta}$
given by (\ref{eq:ham}) and assume that (\ref{eq:non degen neighboor})
is true. 

Let $u$, $\mathrm{v}$, $\alpha$ be the solution of the system (\ref{eq:system of fundamental equations})
on $\mathrm{A_{\Delta-\delta}}$, for any $0<\delta<\Delta$. 

We assume furthermore that :
\[
\begin{cases}
\mid\mathrm{dv}\mid_{\Delta-\delta}<1\\
\Delta R=\frac{\partial\mathrm{u}}{\partial\theta}+\alpha+^{t}d\mathrm{v}.R & =O(\epsilon)
\end{cases}
\]
 Then $\phi$ is a symplectic diffeomorphism onto its image and $\forall(R,\varphi)\in\phi(\mathrm{A_{\rho,\Delta-\delta}})$
:

\begin{eqnarray*}
H_{\epsilon}\circ\phi^{-1}(R,\varphi) & = & \bar{m}+{}^{t}\omega.R\,+\frac{1}{2}\,^{t}R.\bar{S}(\varphi).R+\bar{\epsilon}\,\bar{h}(R,\varphi)+\bar{g}(R,\varphi)\\
 & = & \bar{m}+{}^{t}\omega.R\,+O(\epsilon^{2})+O(R^{2)}
\end{eqnarray*}

With :
\begin{equation}
\begin{cases}
\bar{m}=m+^{t}\omega.\alpha+\epsilon\intop_{\mathrm{T^{d}}}a(\theta)\mathrm{d}\theta\\
\bar{S}(\varphi)=\epsilon.c(\theta)+2\sum_{k=1}^{d}Q_{k}(\theta).(\Delta\tilde{R})_{k}\\
\bar{\epsilon}\,\bar{h}(R,\varphi)=\epsilon\,\intop_{0}^{\,\,1}t\,\,\frac{\partial h}{\partial R}(R_{t}\,,\theta)\,\mathrm{dt}\,\Delta R+\frac{1}{2}\,^{t}\Delta R\left(\intop_{0}^{\,\,1}\,t^{2}\,\frac{\partial^{2}g}{\partial R^{2}}(R_{t}\,,\theta)\,\mathrm{dt}\right)\Delta R+\frac{1}{2}\,^{t}\Delta R.S(\theta).\Delta R=O(\epsilon^{2})\\
\bar{g}(R,\varphi)=g(R,\theta)+\epsilon\sum_{|k|\geq3}\,h_{k}(\theta)\,R^{k}+{}^{t}\Delta R\,.\frac{\partial g}{\partial R}(R,\theta)-^{t}R\left(\sum_{k=1}^{d}Q_{k}(\theta).(\Delta\tilde{R})_{k}\right)R=O(R^{3})
\end{cases}\label{eq:formula scheme}
\end{equation}

Where $\theta=(I+\mathrm{v})^{-1}(\varphi)$, and $\Delta\tilde{R}$,
$Q_{k}$, $R_{t}$ as defined above.
\end{prop}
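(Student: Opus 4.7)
The plan is to verify the claimed normal form by direct algebraic expansion, after first confirming that $\phi$ is a bona fide symplectic diffeomorphism onto its image. Symplecticity is automatic from the generating-function construction. For the diffeomorphism property, the hypothesis $|\mathrm{dv}|_{\Delta-\delta} < 1$ makes $I+\mathrm{dv}(\theta)$ invertible on $A_{\Delta-\delta}$ via a Neumann series; hence the analytic map $\theta \mapsto \theta + \mathrm{v}(\theta)$ has an analytic inverse there, and combined with the formula (\ref{eq:symplectic diffeomorphism}) for $R$ in terms of $r$, this yields $\phi^{-1}$ on $\phi(A_{\rho,\Delta-\delta})$.

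Next I would compute $H_\epsilon\circ\phi^{-1}(R,\varphi) = H_\epsilon(R+\Delta R,\theta)$ with $\theta=(I+\mathrm{v})^{-1}(\varphi)$ by substituting the decomposition (\ref{eq:ham}) and applying the expansions (\ref{eq: epsilon term})--(\ref{eq: quad form}) term by term, grouping by combined $(R,\epsilon)$-order. The constant and first-order-in-$R$, $\theta$-dependent contributions combine into
\[
m + {}^t\omega\alpha + \epsilon a(\theta) + \mathcal{L}_\omega u(\theta) + {}^t R\bigl({}^t\mathrm{dv}(\theta)\,\omega + S(\theta)(\tfrac{\partial u}{\partial\theta}+\alpha) + \epsilon b(\theta)\bigr),
\]
which, by the system (\ref{eq:system of fundamental equations}), collapses to $\bar{m} + {}^t\omega R$. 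Note that the third equation of (\ref{eq:system of fundamental equations}) is precisely the zero-mean compatibility condition making the $\mathrm{v}$-equation solvable in $\mathcal{H}^0_{\Delta-\delta}$ via the Fundamental Lemma.

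The remaining terms must be sorted into $\frac{1}{2}\,{}^t R\bar{S}(\varphi)R$, $\bar{\epsilon}\bar{h}$, and $\bar{g}$. The decisive observation is in (\ref{eq:quad form order 3 term}): the expansion of ${}^t\Delta R\cdot(\partial g/\partial R)(R,\theta)$ produces a term $\sum_k {}^t R\, Q_k(\theta)\, R\cdot(\Delta\tilde R)_k$ which is quadratic in $R$ but carries an $\epsilon$ factor through $\Delta\tilde R$; this contribution, together with the quadratic-in-$R$ piece $\frac{1}{2}\epsilon\,{}^t R\, c(\theta)\, R$ coming from $\epsilon h$, yields exactly the $\bar{S}(\varphi)$ of (\ref{eq:formula scheme}). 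All terms carrying a $\Delta R$ factor (the integral remainder of (\ref{eq: epsilon term}), the second-order Taylor remainder of (\ref{eq:order 3 term}), and $\frac{1}{2}\,{}^t\Delta R\, S\,\Delta R$ from (\ref{eq: quad form})) assemble into $\bar{\epsilon}\bar{h}$ and are $O(\epsilon^2)$ since $\Delta R = O(\epsilon)$; everything else is $O(R^3)$ and forms $\bar{g}$.

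The main obstacle, though essentially bookkeeping, is this sorting step: cross terms generated by (\ref{eq:quad form order 3 term}) and (\ref{eq: quad form}) must be ordered by combined $(R,\epsilon)$-degree, and in particular the $O(R^2)\cdot O(\epsilon)$ contribution coming from the cubic part of $g$ has to be extracted from $\bar{g}$ (with the matching subtraction visible in the last summand of the $\bar g$ formula) and relocated into $\bar{S}$, so that $\bar{g}$ is genuinely $O(R^3)$ rather than only $O(R^2)\cdot O(\epsilon)$. Once this accounting is done the stated estimates $\bar\epsilon\bar h = O(\epsilon^2)$ and $\bar g = O(R^3)$ are immediate from $\Delta R = O(\epsilon)$ and the integral-remainder structure.
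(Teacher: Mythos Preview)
Your proposal is correct and follows essentially the same route as the paper: the paper's ``proof'' of this proposition is precisely the sequence of ad hoc expansions (\ref{eq: epsilon term})--(\ref{eq: quad form}) carried out in Section~3 before the statement, together with the observation that $|\mathrm{dv}|_{\Delta-\delta}<1$ makes $\theta\mapsto\theta+\mathrm{v}(\theta)$ a diffeomorphism and that the system (\ref{eq:system of fundamental equations}) kills the $\theta$-dependent constant and linear-in-$R$ terms. Your account of the sorting---in particular the extraction of the $O(R^2)\cdot O(\epsilon)$ piece from ${}^t\Delta R\,\partial g/\partial R$ via (\ref{eq:quad form order 3 term}) and its relocation into $\bar S$---matches the paper's bookkeeping exactly.
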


\section{KAM quadratic scheme}

We are now in a position to precise the quadratic scheme that will
be crucial for the iterative construction of the symplectic diffeomorphism
in theorem \ref{thm:Kolmogorov-Arnold-Moser}.

Recall that $\omega\in{\rm DC}(c,\tau)$ and that $\hat{S}$ is a
symmetric $d\times d$ real invertible matrix. Let $\tilde{S}$ be
its inverse and $\beta>0$ be such that : 
\[
\forall M\in\mathcal{M}_{n}(\mathbb{C}),\,\,\left|M-\hat{S}\right|<\beta\,\Rightarrow M\,\text{ is invertible and }\left|M^{-1}-\tilde{S}\right|<1
\]

.
\begin{prop}
\label{prop:KAM-Scheme}\textbf{\textup{Quadratic Scheme}}

Let $H\in\mathcal{H}_{\rho,\Delta}$ with the same form as in (\ref{eq:ham})
i.e :

\[
H(r\mathrm{,}\theta)=m+{}^{t}\omega.r\,+\frac{1}{2}\,^{t}r.S(\theta).r+\epsilon\,h(\theta,r)+g(\theta,r)
\]

Where $m$ is a constant, $\omega\in DC(c,\tau)$ , $S(\theta)$ a
symmetric matrix with coefficients in $\mathcal{H_{\mathrm{\rho,\Delta}}}$,
$h,\,g\,\in\mathcal{H_{\mathrm{\rho,\Delta}}}$ with $g(\theta,r)=O\left(r^{3}\right)$
and $|h|_{\rho,\Delta}\leq1$~.

We asssume that $\exists\,\eta_{0}\,,\,\gamma_{0},\,\rho_{0},\,\Delta_{0},\,\epsilon_{0}>0$
~ are given constants such that :
\[
(\mathcal{H}_{1})\quad\begin{cases}
\gamma= & |m|+|S|_{\Delta}+|g|_{\rho,\Delta}+|\hat{S}|+|\tilde{S}|\leq2\gamma_{0}\\
\eta= & \left|S-\hat{S}\right|_{\Delta}\leq\eta_{0}<\beta\\
\rho_{0}\geq & \rho\geq\rho_{0}/2\\
\Delta_{0}\geq & \Delta>\Delta_{0}/2
\end{cases}
\]

Then $\exists\,C(c,\,\tau,\,d,\,\gamma_{0},\,\eta_{0},\,\rho_{0},\,\Delta_{0})$
such that :

\[
(\mathcal{H}_{2})\quad\begin{cases}
\delta\in(0,\min(\rho_{0}/3,\Delta_{0}/3,1))\\
\epsilon<C^{-1}\delta^{2(d+\tau+2)}
\end{cases}
\]
~then there exists a symplectic diffeomorphism $\tilde{\phi}:\mathrm{A_{\bar{\rho},\bar{\Delta}}\rightarrow\mathrm{U\subset A_{\rho,\Delta}}}$
~where $\bar{\rho}=\rho-\delta$ and $\bar{\Delta}=\Delta-\delta$
,~$A_{\rho-5\delta/4,\Delta-5\delta/4}\subseteq\mathrm{U}$ such
that :

\[
H\circ\tilde{\phi}(R\mathrm{,}\varphi)=H(R,\varphi)=\bar{m}+{}^{t}\omega.R\,+\frac{1}{2}\,^{t}R.\bar{S}(\varphi).R+\bar{\epsilon}\,\bar{h}(R,\varphi)+\bar{g}(R,\varphi)
\]

Where $\bar{m}$ is a constant, $\bar{S}(\varphi)$ a symmetric matrix,
$\bar{h},\,\bar{g}\,\in\mathcal{H_{\mathrm{\rho-\delta,\Delta-\delta}}}$
with $\bar{g}(R,\varphi)=O\left(R^{3}\right)$ and $|\bar{h}|_{\rho-\delta,\Delta-\delta}\leq1$~.

Moreover if we let:
\[
\hat{\epsilon}:=|\phi-Id|_{\bar{\rho},\bar{\Delta}}\geq|\phi^{-1}-Id|_{\bar{\rho}-\delta/4,\bar{\Delta}-\delta/4}
\]

then 
\begin{equation}
\hat{\epsilon}\leq C\frac{\epsilon}{\delta^{\nu(\tau,d)}}\label{eq:ineq epsil}
\end{equation}

\end{prop}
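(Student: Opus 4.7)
The plan is to reduce the proof to Proposition \ref{prop:Linearised}, whose formulas already describe the transformed Hamiltonian once the system (\ref{eq:system of fundamental equations}) has been solved. The real work is therefore (i) constructing $u$, $\mathrm{v}$, $\alpha$ with quantitative control on successively shrunk analyticity strips, (ii) verifying the two standing hypotheses of Proposition \ref{prop:Linearised} (namely $|d\mathrm{v}|_{\Delta-\delta}<1$ and $\Delta R=O(\epsilon)$), and (iii) translating the output formulas of Proposition \ref{prop:Linearised} into the requested bounds on $\bar{S}$, $\bar{h}$, $\bar{g}$ and on $\tilde\phi-\mathrm{Id}$. I would split the loss of analyticity $\delta$ into four equal pieces $\delta/4$, one for each shrink (from $\Delta$ to $\Delta-\delta/4$ for $u$, then for $\alpha$, then for $\mathrm{v}$, then for $d\mathrm{v}$) and keep a fifth slice available to invert $\theta\mapsto\theta+\mathrm{v}(\theta)$ and to pass from $\phi$ to $\phi^{-1}$.

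The quantitative steps go as follows. Applying the Fundamental Lemma \ref{lem:Fundamental-lemma} to the first equation gives $|u|_{\Delta-\delta/4}\leq C\epsilon|a|_{\Delta}/\delta^{d+\tau}$, and a Cauchy estimate then controls $|\partial u/\partial\theta|_{\Delta-\delta/2}$ with an extra $1/\delta$. The algebraic equation for $\alpha$ can be inverted thanks to the hypothesis $S\in B(\hat S,\eta_0)\subset B(\hat S,\beta)$, which, by choice of $\beta$, gives $|(\int S)^{-1}|\leq|\tilde S|+1\leq 2\gamma_{0}+1$ and hence $|\alpha|\leq C\epsilon/\delta^{d+\tau+1}$. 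Plugging these into the right-hand side of the equation for $\mathrm{v}$ and invoking the Fundamental Lemma once more produces $|\mathrm{v}|_{\Delta-3\delta/4}\leq C\epsilon/\delta^{2(d+\tau)+1}$, and a final Cauchy estimate gives $|d\mathrm{v}|_{\Delta-\delta}\leq C\epsilon/\delta^{2(d+\tau+1)}$. The smallness hypothesis $\epsilon<C^{-1}\delta^{2(d+\tau+2)}$ has been tuned precisely so that $|d\mathrm{v}|_{\Delta-\delta}<1/2$ and $|\Delta R|=|\partial u/\partial\theta+\alpha+{}^{t}d\mathrm{v}\cdot R|\leq C\epsilon/\delta^{2(d+\tau+1)}<\delta/4$, which is what allows us to both apply Proposition \ref{prop:Linearised} and to guarantee $A_{\rho-5\delta/4,\Delta-5\delta/4}\subset\tilde\phi(A_{\bar\rho,\bar\Delta})$.

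Once these estimates are in hand, I would read off the three pieces of the new Hamiltonian from the formulas (\ref{eq:formula scheme}). The new symmetric part $\bar S(\varphi)=S(\theta)+\epsilon c(\theta)+2\sum_k Q_k(\theta)(\Delta\tilde R)_k\big|_{\theta=(I+\mathrm{v})^{-1}(\varphi)}$ is within $\eta_0$ of $\hat S$ because the two corrections are both $O(\epsilon/\delta^{\text{power}})$ and are absorbed by the smallness hypothesis; the same estimate preserves the upper bound $\gamma\leq 2\gamma_0$ on $\gamma=|m|+|S|+|g|+|\hat S|+|\tilde S|$. The new $\bar\epsilon\bar h$ is visibly quadratic in $\Delta R$ (Taylor remainder terms and the correction by $S$), so $\bar\epsilon\leq C\epsilon^{2}/\delta^{4(d+\tau+1)}$, and dividing by $\bar\epsilon$ gives $|\bar h|\leq 1$ after choosing the constant $C$ large enough in the statement. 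The new $\bar g$ remains $O(R^3)$ by construction and inherits its bound from $|g|_{\rho,\Delta}$ and the Cauchy control on $\partial g/\partial R$. Finally, the estimate $\hat\epsilon=|\phi-\mathrm{Id}|_{\bar\rho,\bar\Delta}\leq C\epsilon/\delta^{\nu(\tau,d)}$ follows directly from the bounds on $u$, $\mathrm{v}$, $\alpha$, and the analogous bound on $\phi^{-1}-\mathrm{Id}$ on the slightly smaller domain $A_{\bar\rho-\delta/4,\bar\Delta-\delta/4}$ comes from a standard fixed-point or Lagrange inversion argument using $|d\mathrm{v}|<1/2$.

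The main obstacle, and the reason the proof demands careful bookkeeping rather than cleverness, is tracking the interlocking shrinkings of $\rho$ and $\Delta$ together with the composition $\theta=(I+\mathrm{v})^{-1}(\varphi)$: the formulas in Proposition \ref{prop:Linearised} are given in the $\theta$-variables whereas the conclusion is phrased in the $\varphi$-variables, so one must check that substituting $\theta=(I+\mathrm{v})^{-1}(\varphi)$ preserves all the required sup-norm bounds and that the resulting domain indeed contains $A_{\rho-5\delta/4,\Delta-5\delta/4}$. This is where the extra $\delta/4$ slices of analyticity that I reserved above are spent, and the precise exponent $\nu(\tau,d)$ — essentially $2(d+\tau+1)$ up to the two extra units absorbed by the Cauchy estimates on the inverse — emerges naturally at this stage.
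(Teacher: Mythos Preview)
Your proposal is correct and follows essentially the same route as the paper: solve (\ref{eq:system of fundamental equations}) in sequence using Lemma~\ref{lem:Fundamental-lemma} and Cauchy estimates on successively shrunk strips (the paper sets $\delta'=\delta/4$ where you write $\delta/4$ directly), verify $|d\mathrm{v}|<1/2$ and $\Delta R=O(\epsilon)$ from the smallness hypothesis, invoke Proposition~\ref{prop:Linearised}, and then bound $\bar m,\bar S,\bar g,\bar\epsilon\bar h$ term by term from (\ref{eq:formula scheme}). The only cosmetic differences are in the precise allocation of the shrinking steps and in the final exponent bookkeeping (the paper records $\nu=2(d+\tau+2)$ and $\bar\epsilon\le C\epsilon^2/\delta^{2\nu}$, absorbing the extra Cauchy losses you mention into the constant).
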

and 
\begin{equation}
\begin{cases}
\bar{\gamma} & \leq\gamma+\hat{\epsilon}\\
\bar{\eta} & \leq\eta+\hat{\epsilon}\\
\bar{\epsilon} & \leq C\frac{\epsilon^{2}}{\delta^{2\nu(\tau,d)}}
\end{cases}\label{eq:ineq eps gamm eta}
\end{equation}
for $\nu(\tau,d)=2(d+\tau+2)$.

Where $\bar{\gamma}=|\bar{m}|+|\bar{S}|_{\bar{\Delta}}+|g|_{\bar{\rho},\bar{\Delta}}+|\hat{S}|+|\tilde{S}|$
and $\bar{\eta}=\left|\bar{S}-\hat{S}\right|_{\bar{\Delta}}$
\begin{proof}
Let $a(\theta)=h(0,\theta)$, $b(\theta)=\frac{\partial h}{\partial r}(0,\theta)$
and $c(\theta)=\frac{\partial^{2}h}{\partial r^{2}}(0,\theta)$. By
Cauchy estimates we have that :
\begin{eqnarray*}
|a|_{\Delta} & \leq & 1\\
|b|_{\Delta} & \leq & \frac{2}{\rho_{0}}\\
|c|_{\Delta} & \leq & \frac{8}{(\rho_{0})^{2}}
\end{eqnarray*}

For $\delta^{'}>0$, by the lemma (\ref{lem:Fundamental-lemma}),
let $u\in\mathcal{H}_{\Delta-\delta'}$ be the unique solution of
:

\[
\mathcal{L_{\omega}}u=-\epsilon a(\theta)+\epsilon\intop_{\mathrm{T^{d}}}a(\theta)\mathrm{d}\theta
\]

By the lemma (\ref{lem:Fundamental-lemma}), we have that :
\[
\mid u\,\mid_{\Delta-\delta^{'}}\leq\frac{C(\tau,\,d)}{\delta^{'(d+\tau)}}\,2\epsilon\mid a\,\mid_{\Delta}\leq2\epsilon\frac{C(\tau,\,d)}{\delta^{'(d+\tau)}}
\]

Also by a Cauchy estimate we obtain that : 
\begin{equation}
\left|\frac{\partial u}{\partial\theta}\right|_{\Delta-1.5\delta'}\leq2\frac{\mid u\,\mid_{\Delta-\delta^{'}}}{\delta^{'}}\leq4\epsilon\frac{C(\tau,\,d)}{\delta^{'(d+\tau+1)}}\label{eq:ineq du dthet}
\end{equation}

Also since $\left|S-\hat{S}\right|_{\Delta}\leq\eta_{0}<\beta$ ,
we have that $\intop_{\mathrm{T^{d}}}S(\theta)\mathrm{d}\theta\in B(\hat{S},\,\beta)$~therefore
$\intop_{\mathrm{T^{d}}}S(\theta)\mathrm{d}\theta$ is invertible
moreover we get that $\left|\left(\intop_{\mathrm{T^{d}}}S(\theta)\mathrm{d}\theta\right)^{-1}\right|\leq|\tilde{S}|+1\leq2\gamma_{0}+1$.\\
So now we let :
\[
\alpha=-\left(\intop_{\mathrm{T^{d}}}S(\theta)\mathrm{d}\theta\right)^{-1}\intop_{\mathrm{T^{d}}}\left(S(\theta)\frac{\partial u}{\partial\theta}+\epsilon\,b(\theta)\right)\mathrm{d}\theta
\]

So we have :
\begin{equation}
\left|\alpha\right|\leq(1+2\gamma_{0})\epsilon\left(8\gamma_{0}\frac{C(\tau,\,d)}{\delta^{'(d+\tau+1)}}+\frac{2}{\rho_{0}}\right)\label{eq:alpha}
\end{equation}

And by the lemma (\ref{lem:Fundamental-lemma}), let$\mathrm{v=^{t}(v_{1},..,v_{d})}$
be the unique element of $\left(\mathcal{H}_{\Delta-2\delta'}\right)^{d}$
, such that :

\[
\mathcal{L_{\omega}}\mathrm{v}=-S(\theta).\left(\frac{\partial u}{\partial\theta}+\alpha\right)-\epsilon\,b(\theta)
\]

Where $\mathcal{L_{\omega}}\mathrm{v}=^{t}(\mathcal{L_{\omega}}\mathrm{v}_{1},...,\mathcal{L_{\omega}}\mathrm{v}_{d})$
.

Hence we deduce easily that we have a constant $C_{\mathrm{v}}$ such
that :
\begin{eqnarray}
|\mathrm{v}|_{\Delta-2\delta'} & \leq & \epsilon\frac{C_{\mathrm{v}}}{\delta'^{(2d+2\tau+1)}}\label{eq:ineq v}\\
|\mathrm{dv}|_{\Delta-3\delta'} & \leq & \epsilon\frac{C_{\mathrm{v}}}{\delta'^{2(d+\tau+1)}}
\end{eqnarray}

Notice that if $\delta'^{2(d+\tau+1)}>2C_{\mathrm{v}}.\epsilon$ (i.e
$|\mathrm{dv}|_{\Delta-3\delta'}<\frac{1}{2}$) then $Id+\mathrm{v}$
is injective on $\mathrm{A_{\Delta-3\delta'}}$ ~and hence a diffeomorphism
on its image. Moreover since under this condition $|\mathrm{v}|_{\Delta-2\delta'}<\delta'$,
the image of $Id+\mathrm{v}$ contains $\mathrm{A_{\Delta-4\delta'}}$.
So on $A_{\rho,\Delta-3\delta'}$, we have the symplectic diffeomorphism
onto its image, given by (\ref{eq:symplectic diffeomorphism}). Also
we get that :

\begin{eqnarray*}
|R-r|_{\rho,\Delta-3\delta'} & \leq & \left|\left(Id+^{t}\mathrm{dv}\right)^{-1}-Id\right|_{\Delta-3\delta'}\rho+\left|\left(Id+^{t}\mathrm{dv}\right)^{-1}\right|_{\Delta-3\delta'}\left(|\alpha|+\left|\frac{\partial u}{\partial\theta}\right|_{\Delta-1.5\delta'}\right)
\end{eqnarray*}

So we have a constant $C_{\mathrm{R}}(\tau,d,\gamma_{0},\rho_{0},\Delta_{0})$
such that :
\begin{equation}
|R-r|_{\rho,\Delta-3\delta'}\leq\epsilon\frac{C_{\mathrm{R}}}{\delta'^{2(d+\tau+1)}}\label{eq:ineq delta R}
\end{equation}

And by (\ref{eq:ineq v}) and (\ref{eq:ineq delta R}) :

\begin{equation}
|\phi-Id|_{\rho,\Delta-3\delta'}\leq\epsilon\frac{\max(C_{\mathrm{R}},\,C_{\mathrm{v}}.\Delta_{0}/2)}{\delta'^{2(d+\tau+1)}}\label{eq: phi major}
\end{equation}

On $B=\phi(A_{\rho,\Delta-3\delta'})$, with $w=(I+\mathrm{v})^{-1}$~we
have : 
\[
\begin{cases}
\theta= & \varphi-\mathrm{v}(w(\varphi))\\
r\,= & \frac{\partial\mathrm{u}}{\partial\theta}+\alpha+R+^{t}d\mathrm{v}.R
\end{cases}
\]

For~ $\phi^{-1}=\tilde{\phi}:\,B\rightarrow A_{\rho,\Delta-3\delta'}$,
we clearly have: 
\begin{equation}
|\phi-Id|_{\rho,\Delta-3\delta'}=|\phi^{-1}-Id|_{B}\leq\epsilon\frac{C_{\phi}}{\delta'^{2(d+\tau+1)}}\label{eq: phi inv maj}
\end{equation}

If $\epsilon.\max(C_{\mathrm{R}},\,C_{\mathrm{v}}.\Delta_{0}/2)<\delta'^{2(d+\tau)+3}$,
then $\phi$ defined by (\ref{eq:symplectic diffeomorphism}) is a
diffeomorphism on $A_{\rho,\Delta-3\delta'}$ and $\phi(A_{\rho,\Delta-3\delta'})\supseteq A_{\rho-\delta',\Delta-4\delta'}$.
~Moreover, ~$\phi^{-1}(A_{\rho-4\delta^{'},\Delta-4\delta'})\supseteq A_{\rho-5\delta',\Delta-5\delta'}$.

By setting $\delta=4\,\delta^{'}$ and $C_{\delta}=4^{2(d+\tau)+3}\,\max(C_{\Delta R},C_{\mathrm{v}}.\Delta_{0}/2,\,2C_{\mathrm{v}})$,
we have just shown that if : 
\begin{eqnarray}
\delta & < & \min\left(1,\frac{\rho_{0}}{3},\frac{\Delta_{0}}{3}\right)\nonumber \\
\epsilon & < & C_{\delta}^{-1}\delta^{2(d+\tau+2)}\label{eq:epsilon h1}
\end{eqnarray}
Then $\phi$ is a diffeomorphism from $A_{\rho,\Delta-3\delta/4}$
onto a domain that contains $A_{\rho-\delta/4,\Delta-3\delta/4}$
and that $\phi^{-1}(A_{\rho-\delta,\Delta-\delta})\supseteq A_{\rho-5\delta/4,\Delta-5\delta/4}$.

And we have more generally that $\forall\tilde{\rho}\leq\rho,\,\tilde{\Delta}\leq\Delta$
\begin{equation}
A_{\tilde{\rho}-5\delta/4,\tilde{\Delta}-5\delta/4}\subseteq\phi^{-1}(A_{\tilde{\rho}-\delta,\tilde{\Delta}-\delta})\subseteq A_{\tilde{\rho}-3\delta/4,\tilde{\Delta}-3\delta/4}\label{eq:domain analyticity}
\end{equation}

Let $\tilde{\phi}$ be the restriction of $\phi^{-1}$ to $A_{\rho-\delta,\Delta-\delta}$,
then by (\ref{eq: phi major}) and (\ref{eq: phi inv maj}), we have
a constant $C_{\mathrm{\phi}}(\tau,d,\gamma_{0},\rho_{0},\Delta_{0})$
such that :
\[
\hat{\epsilon}=|\tilde{\phi}-Id|_{\rho-\delta,\Delta-\delta}\leq C_{\phi}\frac{\epsilon}{\delta^{2\tau+2d+2}}
\]

and : 
\[
|\tilde{\phi}^{-1}-Id|_{\rho-5\delta/4,\Delta-5\delta/4}\leq\epsilon\frac{C_{\phi}}{\delta{}^{2(d+\tau+1)}}
\]

By proposition \ref{prop:Linearised}, we have then :
\[
H\circ\tilde{\phi}(R\mathrm{,}\varphi)=H(R,\varphi)=\bar{m}+{}^{t}\omega.R\,+\frac{1}{2}\,^{t}R.\bar{S}(\varphi).R+\bar{\epsilon}\,\bar{h}(R,\varphi)+\bar{g}(R,\varphi)
\]

And by (\ref{eq:formula scheme}) , we have: 
\begin{align*}
\bar{m}= & m+^{t}\omega.\alpha+\epsilon\intop_{\mathrm{T^{d}}}a(\theta)\mathrm{d}\theta\\
\bar{S}(\varphi)= & S(\theta)+\epsilon.c(\theta)+2\sum_{k=1}^{d}Q_{k}(\theta).(\Delta\tilde{R})_{k}\\
\bar{g}(R,\varphi)= & g(R,\theta)+\epsilon\sum_{|k|\geq3}\,h_{k}(\theta)\,R^{k}+{}^{t}\Delta R\,.\frac{\partial g}{\partial R}(R,\varphi)-^{t}R\left(\sum_{k=1}^{d}Q_{k}(\theta).(\Delta\tilde{R})_{k}\right)R\\
\bar{\epsilon}\bar{h}(R,\varphi)= & \epsilon\,\intop_{0}^{\,\,1}t\,\,\frac{\partial h}{\partial R}(R_{t}\,,\theta)\,\mathrm{dt}\,\Delta R+\frac{1}{2}\,^{t}\Delta R\left(\intop_{0}^{\,\,1}\,t^{2}\,\frac{\partial^{2}g}{\partial R^{2}}(R_{t}\,,\theta)\,\mathrm{dt}\right)\Delta R+\frac{1}{2}\,^{t}\Delta R.S(\theta).\Delta R
\end{align*}

Where 
\[
\theta=\varphi-\mathrm{v}(w(\varphi))
\]

For $\bar{m}$, by (\ref{eq:alpha}), $\exists C_{m}$ a constant
(that is a function of the fixed parameters above) such that :

\[
|\bar{m}|\leq|m|+\epsilon\frac{C_{m}}{\delta{}^{2(d+\tau+2)}}
\]

Also $Q_{k}(\theta)=\frac{1}{6}\frac{\partial g}{\partial R_{k}\partial R_{i}\partial R_{j}}(0,\,\theta)$
, $\theta=\varphi-\mathrm{v}(w(\varphi))\in A_{\Delta-3\delta/4}$
and $g$ is analytical on $A_{\rho,\Delta}$ , so by a Cauchy estimate
: 
\[
|Q_{k}(\theta)|_{\Delta-3\delta/4}\leq8\frac{\gamma_{0}}{\delta^{3}}
\]

And by (\ref{eq:ineq du dthet}) and (\ref{eq:alpha}) we have $C_{S}$
a constant such that :

\[
\left|\epsilon.c(\theta)+2\sum_{k=1}^{d}Q_{k}(\theta).(\Delta\tilde{R})_{k}\right|_{\rho-\delta,\Delta-\delta}\leq\epsilon\frac{C_{S}}{\delta^{d+\tau+4}}
\]

So :
\[
\bar{\eta}\leq\eta+\epsilon\frac{C_{S}}{\delta^{d+\tau+4}}\leq\eta+\epsilon\frac{C_{S}}{\delta^{2(d+\tau+2)}}
\]

Similarly by a Cauchy estimate since $(R,\theta)\in A_{\rho-\delta,\Delta-3\delta/4}$,
we have that :

\[
\left|\frac{\partial g}{\partial R}(R,\theta(\phi))\right|_{\rho-\delta,\Delta-\delta}\leq2\gamma_{0}/\delta
\]

So we have a constant $C_{g}$ such that: 
\[
|\bar{g}|_{\rho-\delta,\Delta-\delta}\leq|g|_{\rho,\Delta}+C_{g}\epsilon\frac{1}{\delta{}^{2(d+\tau+2)}}
\]

Finally we notice that $\forall t\in[0,1]\,\,\,(R_{t},\theta(\varphi))\in A_{\rho-\delta,\Delta-3\delta/4}$
~because of (\ref{eq:domain analyticity}) so by a Cauchy estimate
and previous estimates we get a constant $C_{\epsilon}$ such that
:

\[
\bar{\epsilon}|\bar{h}|_{\rho-\delta,\Delta-\delta}\leq C_{\epsilon}\epsilon^{2}\frac{1}{\delta^{4(d+\tau+2)}}
\]

With $\bar{\epsilon}=C_{\epsilon}\epsilon^{2}\frac{1}{\delta^{4(d+\tau+2)}}$
, we get $\bar{h}|_{\rho-\delta,\Delta-\delta}\leq1$ .

Now if set $C=C_{\epsilon}+C_{\phi}+C_{\delta}+C_{m}+C_{S}+C_{g}$
, then if ($\mathcal{H}_{2}$ ) holds true , so does (\ref{eq:epsilon h1})
and therefore we have the symplectic diffeomorphism with the required
estimates.
\end{proof}

\section{Iteration of the quadratic scheme }

We need first the following bounding lemma :
\begin{lem}
\label{lem:Majoration-lemma}Bounding lemma
\[
\forall C_{1},C_{2},C_{3},C_{4},c_{1},c_{2}>0,\,\,\,\,\exists\kappa>0\,\,\,\,\,(|x|<\kappa)\Rightarrow\begin{cases}
\sup_{n\geq0}\left(|x|^{2^{n}}(C_{1})^{n}(C_{2})^{n^{2}}\right) & <c_{1}\\
\sum_{n=0}^{\infty}|x|^{2^{n}}(C_{3})^{n}(C_{4})^{n^{2}} & <c_{2}
\end{cases}
\]
\end{lem}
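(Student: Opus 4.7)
The heart of the lemma is the fact that $2^{n}$ grows doubly exponentially while $n$ and $n^{2}$ only polynomially, so for $|x|<1$ small enough the super-exponential decay of $|x|^{2^{n}}$ kills the factors $C^{n}D^{n^{2}}$. Writing $\alpha:=-\log|x|>0$ and taking logarithms, a generic term becomes
\[
\log\bigl(|x|^{2^{n}}\,C^{n}\,D^{n^{2}}\bigr)\;=\;-\alpha\,2^{n}+n\log C+n^{2}\log D,
\]
and the whole game is to show that the first term wins.

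More precisely, since $2^{n-1}$ grows faster than any polynomial in $n$, I can pick a threshold $N=N(C,D)$, \emph{independent of $\alpha$}, such that $n|\log C|+n^{2}|\log D|\le 2^{n-1}$ for every $n\ge N$. Restricting to $|x|\le e^{-1}$ (so $\alpha\ge 1$), this gives for $n\ge N$
\[
\log\bigl(|x|^{2^{n}}\,C^{n}\,D^{n^{2}}\bigr)\;\le\;-\alpha\,2^{n}+2^{n-1}\;\le\;-\alpha\,2^{n-1},
\]
i.e. $|x|^{2^{n}}C^{n}D^{n^{2}}\le|x|^{2^{n-1}}$. For the finitely many indices $n<N$ I simply use the crude bound $|x|^{2^{n}}\le|x|$ to write $|x|^{2^{n}}C^{n}D^{n^{2}}\le|x|\,C^{N}D^{N^{2}}$.

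Combining, the supremum splits as the maximum of a ``low'' part bounded by $|x|\,C^{N}D^{N^{2}}$ and a ``high'' part bounded by $|x|^{2^{N-1}}$; both vanish as $|x|\to 0$, yielding a $\kappa_{1}$ that makes the sup smaller than $c_{1}$. The series splits analogously: the finite part is bounded by $N|x|\,C^{N}D^{N^{2}}$ and the tail by $\sum_{n\ge N}|x|^{2^{n-1}}$, whose first term dominates the rest once $|x|$ is small, so it is itself arbitrarily small; this gives a $\kappa_{2}$ making the sum smaller than $c_{2}$. Applying the scheme once with $(C,D)=(C_{1},C_{2})$ and once with $(C,D)=(C_{3},C_{4})$, I take $\kappa$ to be the minimum of $\kappa_{1}$, $\kappa_{2}$ and $e^{-1}$.

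There is no genuine obstacle here: the statement is just a quantitative restatement of ``doubly-exponential beats polynomial-in-exponent''. The only subtlety is to choose the threshold $N$ \emph{independently} of $|x|$, which is precisely what the fact that $C_{1},\dots,C_{4}$ are fixed in advance allows.
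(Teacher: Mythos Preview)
Your argument is correct. The only cosmetic slip is the bound $|x|^{2^{n}}C^{n}D^{n^{2}}\le |x|\,C^{N}D^{N^{2}}$ for $n<N$, which as written assumes $C,D\ge 1$; replacing $C^{N}D^{N^{2}}$ by $\max_{0\le n<N}C^{n}D^{n^{2}}$ fixes this at no cost.

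The paper's proof reaches the same conclusion by a different, more direct route: instead of splitting into a finite head and a tail at some threshold $N$, it uses a single elementary inequality valid for \emph{all} $n\ge 0$, namely $2^{n}\ge (\ln 2)\bigl(1+n+\tfrac{n^{2}}{2}\bigr)$, to write (for $0<x<1$)
\[
2^{n}\ln x + n\ln C_{1} + n^{2}\ln C_{2}\;\le\;(\ln 2)\ln x + n\bigl((\ln 2)\ln x+\ln C_{1}\bigr)+n^{2}\bigl(\tfrac{1}{2}(\ln 2)\ln x+\ln C_{2}\bigr),
\]
and then chooses $\kappa$ explicitly so that each of the three bracketed quantities is below the required threshold. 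The series estimate is then deduced from the sup estimate applied to $(2C_{3},2C_{4})$, forcing each term below $c_{2}/2^{n+n^{2}}$. The payoff of the paper's approach is an explicit closed-form $\kappa$ and no case distinction in $n$; the payoff of your threshold argument is that it needs no quantitative lower bound on $2^{n}$, only the qualitative fact that exponentials beat polynomials, which makes it immediately portable to any super-polynomial exponent in place of $2^{n}$.
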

\begin{proof}
Let $C_{1},\,c_{1}>0$, then write for any $0<x<1$ and for any $n\geq0$:

\[
x^{2^{n}}(C_{1})^{n}<\exp\left(n.(\ln(x)\ln(2)+\ln(C_{1}))+n^{2}\left(\frac{1}{2}\ln(x)\ln(2)+\ln(C_{2})\right)+\ln(x)\ln(2)\right)
\]

If $\left(x<\exp\left(\frac{1}{\ln(2)}\min(-2\ln(C_{2}),-\ln(C_{1}),\ln(c_{1}))\right)=\kappa_{1}\right)$
then we have indeed :
\[
\forall n\in\mathbb{N},\,x^{2^{n}}(C_{1})^{n}(C_{2})^{n^{2}}<c_{1}
\]

In particular, if $\left(x<\exp\left(\frac{1}{\ln(2)}\min(-2\ln(2C_{4}),-\ln(2C_{3}),\ln(c_{2}))\right)=\kappa_{2}\right)$
, 
\[
\forall n\in\mathbb{N},\,x^{2^{n}}(C_{3})^{n}(C_{4})^{n^{2}}<c_{2}\frac{1}{2^{n+n^{2}}.3}
\]

So by summing this inequality, we get both inequalities for $x<\kappa=\min(\kappa_{1},\kappa_{2})$.
\end{proof}
We can easily prove a similar inequality with any polynom $P$ :\linebreak{}
\[
(\forall c>0,\,\exists\kappa>0),\,\,(0<x<\kappa)\Rightarrow\,\,\left(\sum_{n=0}^{\infty}x^{2^{n}}\exp(P(n))\right)<c
\]
.

We proceed now to the iterative construction 

We let $\rho_{0}=\rho$ , $\Delta_{0}=\Delta$ and $\forall n\geq1$
:
\[
\begin{cases}
\delta_{n}=\frac{1}{3^{n}}\min(\rho_{0},\Delta_{0},1)\\
\rho_{n}=\rho_{n-1}-\delta_{n}\\
\Delta_{n}=\Delta_{n-1}-\delta_{n}
\end{cases}
\]

Clearly $\forall n\geq0\,\,\Delta_{0}\geq\Delta_{n}\geq\Delta_{0}/2$~and
$\,\rho_{0}\geq\rho_{n}\geq\rho_{0}/2$. Let $\delta=\min(\rho_{0},\Delta_{0},1)$.

Also with $H_{\epsilon}$ as in theorem \ref{thm:Kolmogorov-Arnold-Moser}
i.e $H_{\epsilon}(r,\theta)=f_{0}(r)+\epsilon f_{1}(r,\theta)$, we
make obvious substitutions to put in the following form:
\[
H_{\epsilon}(r,\theta)=m_{0}+{}^{t}\omega.r\,+\frac{1}{2}\,^{t}r.S_{0}.r+\epsilon_{0}\,h_{0}(r,\theta)+g_{0}(R,\theta)
\]

Where $m_{0}=f_{0}(0)$ is a constant, $\omega=\frac{\partial f_{0}}{\partial r}(0)\in DC(c,\tau)$
, $S_{0}=\frac{\partial^{2}f_{0}}{\partial r^{2}}(0)$ an invertible
symmetric matrix ,$h_{0},\,g_{0}\,\in\mathcal{H_{\mathrm{\rho,\Delta}}}$
with $g_{0}(r,\theta)=O\left(r^{3}\right)$ and $|h_{0}|_{\rho,\Delta}\leq1$~.

Also previously, let $\hat{S}=S_{0}$ and let $\tilde{S}$ be its
inverse and $\beta>0$ be such that : 
\[
\forall M\in\mathcal{M}_{n}(\mathbb{C}),\,\,\left|M-\hat{S}\right|\leq\beta\,\Rightarrow M\,\text{ is invertible and }\left|M^{-1}-\tilde{S}\right|\leq1
\]
Finally we let $H_{0}=H_{\epsilon}$ , $\gamma_{0}=|S_{0}|+|g_{0}|_{\rho_{0},\Delta_{0}}+|m_{0}|+|S_{0}^{-1}|$
and $\eta_{0}=\beta$.

Also let $C(c,\,\tau,\,d,\,\gamma_{0},\,\eta_{0},\,\rho_{0},\,\Delta_{0})$
be the constant given by the proposition \ref{prop:KAM-Scheme} and
$\nu=2.(d+\tau+2)$
\begin{prop}
Inductive s\label{prop:Inductive-step}tep

$\exists\kappa>0$, if $\epsilon_{0}\in(0,\kappa)$, we have a sequence
$(\phi_{n})_{n\geq1}$ of real analytic symplectic diffeomorphism
with domain $A_{\rho_{n},\Delta_{n}}$ such that :

\[
\phi_{n}\left(A_{\rho_{n},\Delta_{n}}\right)\subseteq A_{\rho_{n-1},\Delta_{n-1}}
\]

And if $\forall n\geq1$, $H_{n}=H_{n-1}\circ\phi_{n}\in\mathcal{H}_{\rho_{n},\Delta_{n}}$
then :

\[
H_{n}(r,\theta)=m_{n}+{}^{t}\omega.r\,+\frac{1}{2}\,^{t}r.S_{n}.r+\epsilon_{n}\,h_{n}(r,\theta)+g_{n}(R,\theta)
\]

Where $m_{n}$ is a constant $S_{n}$ a symmetric matrix ,$h_{n},\,g_{n}\,\in\mathcal{H_{\mathrm{\rho_{n},\Delta_{n}}}}$
with $g_{n}(r,\theta)=O\left(r^{3}\right)$ and $|h_{n}|_{\rho,\Delta}\leq1$

Such that $\forall n\geq1$ 
\[
\epsilon_{n}<C^{-1}\delta_{n+1}^{\nu}
\]
 
\begin{equation}
\epsilon_{n}\leq C^{n}\epsilon_{0}^{2^{n}}\delta^{-2\nu n}3^{\nu n(n+1)}\label{eq:cond eps}
\end{equation}

\end{prop}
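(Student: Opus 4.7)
The proof plan is to proceed by strong induction on $n$, applying Proposition \ref{prop:KAM-Scheme} at each stage with $\delta$ replaced by $\delta_{n+1}$ and $(\rho,\Delta)$ replaced by $(\rho_n,\Delta_n)$. For the base case I would take $H_0$ as given and invoke the quadratic scheme with $\delta_1$, which requires only that $\epsilon_0<C^{-1}\delta_1^{\nu}$; I would read off $\phi_1$ and $H_1$ directly. The structural conclusion that $H_n$ has the prescribed normal form with $g_n=O(r^3)$ and $|h_n|_{\rho_n,\Delta_n}\le 1$ is then immediate from Proposition \ref{prop:KAM-Scheme}, provided its hypothesis $(\mathcal H_1)$ holds at every step.

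The inductive step breaks into two tasks. First, derive the recursive bound on the perturbation size: from \eqref{eq:ineq eps gamm eta} we get $\epsilon_{n+1}\le C\epsilon_n^{2}/\delta_{n+1}^{2\nu}$, and since $\delta_{n+1}=\delta/3^{n+1}$ with $\delta=\min(\rho_0,\Delta_0,1)$, this reads
\[
\epsilon_{n+1}\le C\,\epsilon_n^{2}\,3^{2\nu(n+1)}\delta^{-2\nu}.
\]
Unrolling this recursion gives a bound of the claimed form, with the superexponential factor $\epsilon_0^{2^n}$ dominating the polynomial-in-$n$ losses $3^{\nu n(n+1)}$ and $\delta^{-2\nu n}$. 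This is exactly the type of product/sum controlled by Lemma \ref{lem:Majoration-lemma}, which I would invoke in the shape $x^{2^n}(C_1)^n(C_2)^{n^2}$ with $x=\epsilon_0$. From this one also recovers the smallness requirement $\epsilon_n<C^{-1}\delta_{n+1}^{\nu}$ needed to legally invoke Proposition \ref{prop:KAM-Scheme} at the next stage, again for $\epsilon_0$ below a suitable $\kappa$.

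The second and more delicate task is to verify that the structural hypothesis $(\mathcal H_1)$ persists, namely $\gamma_n\le 2\gamma_0$ and $\eta_n\le\eta_0<\beta$. The inequalities in \eqref{eq:ineq eps gamm eta} give the telescoping bounds
\[
\gamma_n\le\gamma_0+\sum_{k=0}^{n-1}\hat\epsilon_k,\qquad \eta_n\le\eta_0+\sum_{k=0}^{n-1}\hat\epsilon_k,
\]
and by \eqref{eq:ineq epsil} each increment satisfies $\hat\epsilon_k\le C\epsilon_k/\delta_{k+1}^{\nu}\le C\epsilon_k\,3^{\nu(k+1)}\delta^{-\nu}$. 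Substituting the explicit bound on $\epsilon_k$ yields a series of the form $\sum_{k}\epsilon_0^{2^k}\exp(P(k))$ for a polynomial $P$; the strengthened form of Lemma \ref{lem:Majoration-lemma} noted immediately after its proof then makes this sum arbitrarily small once $\epsilon_0<\kappa$, with $\kappa$ chosen so that the total contribution stays below $\gamma_0$ and below $\beta-\eta_0$ simultaneously. This preserves invertibility of $\int_{\mathbb{T}^d}S_n(\theta)d\theta$ inside $B(\hat S,\beta)$, preserves the bound $\gamma_n\le 2\gamma_0$, and keeps us inside the hypotheses of Proposition \ref{prop:KAM-Scheme}.

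The main obstacle is choosing a single $\kappa$ that simultaneously enforces three things at every $n$: the smallness condition $\epsilon_n<C^{-1}\delta_{n+1}^{\nu}$ of $(\mathcal H_2)$, the uniform bound $\gamma_n\le2\gamma_0$, and the confinement $\eta_n\le\eta_0<\beta$. Each of these is a condition of the form ``a certain tail of a superexponentially convergent series is small,'' which is precisely what the bounding lemma handles; the only care required is to set up the constants $C_1,C_2,C_3,C_4$ in terms of the fixed data $(C,\nu,\delta,\gamma_0,\beta,\eta_0)$ before invoking it, and to take $\kappa$ as the minimum over the finitely many instantiations that arise. Once $\kappa$ is so chosen and $\epsilon_0<\kappa$ is assumed, the induction closes and the sequence $(\phi_n,H_n)$ with the stated properties is produced.
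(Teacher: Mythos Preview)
Your proposal is correct and follows essentially the same route as the paper: induction on $n$, with Proposition~\ref{prop:KAM-Scheme} applied at step $n$ on $A_{\rho_n,\Delta_n}$ with loss $\delta_{n+1}$, the recursion $\epsilon_{n+1}\le C\epsilon_n^{2}\delta_{n+1}^{-2\nu}$ unrolled to the explicit superexponential bound, and Lemma~\ref{lem:Majoration-lemma} invoked up front to choose a single $\kappa$ that simultaneously forces $(\mathcal H_2)$ at every step and keeps the telescoping sums $\sum_k\hat\epsilon_k$ small enough for $(\mathcal H_1)$ to persist. One notational point: in the paper's setup $S_0=\hat S$ so the initial discrepancy $\left|S_0-\hat S\right|_{\Delta_0}$ is zero and $\eta_0$ is used as the fixed threshold (essentially $\beta$), hence the telescoping reads $\eta_n\le\sum_{k=1}^n\hat\epsilon_k\le\eta_0$ rather than $\eta_n\le\eta_0+\sum_k\hat\epsilon_k$; your bookkeeping should be adjusted accordingly, but the argument is unaffected.
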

and
\begin{equation}
\hat{\epsilon}_{n}\leq C^{n}\epsilon_{0}^{2^{n-1}}\delta^{\nu}3^{\nu n^{2}}\delta^{-2\nu n}\label{eq:cond eps hat}
\end{equation}
 
\begin{equation}
\begin{cases}
\gamma_{n} & \leq\gamma_{n-1}+\hat{\epsilon}_{n}\leq2\gamma_{0}\\
\eta_{n} & \leq\eta_{n-1}+\hat{\epsilon}_{n}\leq\eta_{0}
\end{cases}\label{eq:cond bounds}
\end{equation}

Where $\gamma_{n}=|m_{n}|+|S_{n}|_{\Delta_{n}}+|g_{n}|_{\rho_{n},\Delta_{n}}+|\hat{S}|+|\tilde{S}|$,
$\eta_{n}=\left|S_{n}-\hat{S}\right|_{\Delta_{n}}$

And 
\[
\hat{\epsilon}_{n}:=|\phi-Id|_{\rho_{n},\Delta_{n}}\geq|\phi^{-1}-Id|_{\rho_{n}-\delta_{n}/4,\Delta_{n}-\delta_{n}/4}
\]

Moreover $\forall\tilde{\rho}\in(\delta_{n}/4,\,\rho_{n}]$,~$\forall\tilde{\Delta}\in(\delta_{n}/4,\,\Delta_{n}]$,
\begin{equation}
\phi_{n}(A_{\tilde{\rho},\tilde{\Delta}})\supseteq A_{\tilde{\rho}-\delta_{n}/4,\tilde{\Delta}-\delta_{n}/4}\label{eq:domain by phi map}
\end{equation}

And :
\[
\begin{cases}
\sum_{n=1}^{\infty} & \hat{\epsilon}_{n}<\infty\\
\lim_{n\rightarrow\infty} & \epsilon_{n}=0
\end{cases}
\]

\begin{proof}
Let $\kappa$ given by the bounding lemma such that

\begin{equation}
(|x|<\kappa)\Rightarrow\begin{cases}
\sup_{n\geq0}\left(|x|^{2^{n}}(C\delta^{-2\nu})^{n}3^{\nu}{}^{(n+1)^{2}}\right) & <C^{-1}\delta^{\nu}\\
\sum_{n=0}^{\infty}|x|^{2^{n}}(C\delta^{-2\nu})^{(n+1)}3^{\nu(n+1)^{2}} & <\min(\eta_{0},\gamma_{0})\delta^{-\nu}
\end{cases}\label{eq: condition H1}
\end{equation}

The first condition implies that if for a given $n\geq0$ , (\ref{eq:cond eps})
is true, then $\epsilon_{n}<C^{-1}\delta_{n+1}^{\nu}$ so condition
$(\mathcal{H}_{2})$ is true.

The second condition impies that if $\forall k\in\left\llbracket 1,n\right\rrbracket $,
(\ref{eq:cond eps hat}) then ${\displaystyle \sum_{k=1}^{n}\hat{\epsilon}_{k}<\min(\eta_{0},\gamma_{0})}$.

We will use these two facts to ensure during the induction that ($\mathcal{H}_{1}$)
and ($\mathcal{H}_{2}$) of proposition \ref{prop:KAM-Scheme} are
true for every $n$.

For $n=1$ we have : 
\[
\epsilon_{0}<C^{-1}\delta_{1}^{\nu}
\]

So for the Hamiltonian $H_{0},$ $(\mathcal{H}_{1})$ is true and
($\mathcal{H}_{2}$) is clearly true as seen before the statement
of the proposotion. By proposition \ref{prop:KAM-Scheme}, we have
a symplectic diffeomorphism $\phi_{1}$ with $\phi_{1}\left(A_{\rho_{1},\Delta_{1}}\right)\subseteq A_{\rho_{0},\Delta_{0}}$,
$H_{1}=H_{0}\circ\phi_{1}\in\mathcal{H}_{\rho_{1},\Delta_{1}}$and
:
\[
\begin{cases}
\hat{\epsilon}_{1}\leq & C\epsilon_{0}/\delta_{1}^{\nu}=3^{\nu}C\epsilon_{0}/\delta^{\nu}\\
\epsilon_{1}\leq & C\epsilon_{0}^{2}/\delta_{1}^{\nu}\\
\gamma_{1}\leq & \gamma_{0}+\hat{\epsilon}_{1}\le2\gamma_{0}\\
\eta_{1}\leq & \hat{\epsilon}_{1}\leq\eta_{0}
\end{cases}
\]

Where the last two inequalities are due to $\hat{\epsilon}_{1}<\min(\gamma_{0}\eta_{0})$.

Moreover the relation (\ref{eq:domain by phi map}) results from (\ref{eq:domain analyticity})
obtained during the proof of the quadratic scheme. This shows that
our proposition is true for $n=1$.

If we have $\phi_{1},...,\phi_{n}$ symplectic diffeomorphisms as
in the proposition, with the corresponding hamiltonians such that
(\ref{eq:cond eps}) to (\ref{eq:domain by phi map}) are true.

The first remark made before the begining of this induction show that
for the Hamiltonian $H_{n},$ $(\mathcal{H}_{2})$ is true and ($\mathcal{H}_{1}$)
is true by the induction hypothesis.By proposition \ref{prop:KAM-Scheme},
we have a symplectic diffeomorphism $\phi_{n+1}$ with $\phi_{n+1}\left(A_{\rho_{n+1},\Delta_{n+1}}\right)\subseteq A_{\rho_{n},\Delta_{n}}$,
$H_{n+1}=H_{n}\circ\phi_{n+1}\in\mathcal{H}_{\rho_{n+1},\Delta_{n+1}}$and
:

\[
\begin{cases}
\hat{\epsilon}_{n+1}\leq & C\epsilon_{n}/\delta_{n+1}^{\nu}\\
\epsilon_{n+1}\leq & C\epsilon_{n}^{2}/\delta_{n+1}^{\nu}\\
\gamma_{n+1}\leq & \gamma_{n}+\hat{\epsilon}_{n+1}\\
\eta_{n+1}\leq & \eta_{n}+\hat{\epsilon}_{n+1}
\end{cases}
\]

From the first two inequalities and the induction hypothesis, we get
directly for $n+1$, the relations (\ref{eq:cond eps}) and (\ref{eq:cond eps hat}).
From the last two inequalities and the induction hypothesis we get
:
\[
\begin{cases}
\gamma_{n+1}\leq & \gamma_{n}+\hat{\epsilon}_{n+1}\leq\gamma_{0}+\sum_{k=1}^{n+1}\hat{\epsilon}_{k}\\
\eta_{n+1}\leq & \eta_{n}+\hat{\epsilon}_{n+1}\leq\sum_{k=1}^{n+1}\hat{\epsilon}_{k}
\end{cases}
\]

Because of the preliminary remark in relation to (\ref{eq: condition H1}),
we obtain (\ref{eq:cond bounds}). Again the relation (\ref{eq:domain by phi map})
results from (\ref{eq:domain analyticity}) obtained during the proof
of the quadratic scheme. And this finishes the induction.

Also (\ref{eq: condition H1}) leads easily to both $\sum_{n=1}^{\infty}\hat{\epsilon}_{n}<\infty$
and $\lim_{n\rightarrow\infty}\epsilon_{n}=0$.
\end{proof}

\section{Convergence and conclusion}

Let $\epsilon_{0}\in(0,\kappa)$ with $\kappa$ such that $\phi_{n}$
is the sequence of the symplectic diffeomorphism given by proposition
\ref{prop:Inductive-step} , $H_{n}$ the corresponding sequence of
Hamiltonians and relations (\ref{eq:cond eps}) to (\ref{eq:domain by phi map})
hold.
\begin{proof}
of Theorem \ref{thm:Kolmogorov-Arnold-Moser}

We have proved that $S_{n}$ is a bounded sequence in $\mathcal{H}_{\Delta_{0}/2}$,
and $g_{n}$is a bounded sequence in $\mathcal{H}_{\rho_{0}/2,\Delta_{0}/2}$,
so by Ascoli theorem, we can extract from these sequences, convergent
subsequences.

$m_{n}$ is bounded so has a convergent subsequence too and $\epsilon_{n}h_{n}$
has clearly a zero limit.

So we have $(n_{k})_{k\geq0}$ such that : 
\[
\lim_{k\rightarrow\infty}\,\left|H_{n_{k}}\right|_{\rho_{0}/2,\Delta_{0}/2}=^{t}\omega.r+O(r^{2})
\]
.

We consider now the sequence of symplectic maps $\bar{\psi}_{n}=\phi_{1}\circ..\circ\phi_{n}$
for $n\geq1$ and $\bar{\psi}_{0}=Id$

Because of (\ref{eq:domain by phi map}) we have by an easy induction
and by letting $\sigma_{n}={\displaystyle \sum_{1}^{n}\delta_{k}/4}$,
that:
\[
\bar{\psi}_{n}(A_{\rho_{n},\Delta_{n}})\supseteq A_{\rho_{n}-\sigma_{n},\Delta_{n}-\sigma_{n}}=A_{\rho_{0}-5\sigma_{n},\Delta_{0}-5\sigma_{n}}\supseteq A_{3\rho/8,\,3\Delta/8}
\]

We consider then $\psi_{n}=\bar{\psi}_{n}^{-1}|_{V}$ where $V=A_{3\rho/8,\,3\Delta/8}$.

We have $\psi_{n}=\phi_{n}^{-1}\circ..\circ\phi_{1}^{-1}$ on $V$,
so since $\hat{\epsilon}_{n}<C\epsilon_{n}/\delta_{n}^{v}$ , we get
for all $n\ge1$:

\[
|\psi_{n}-\psi_{n-1}|_{3\rho/8,3\Delta_{0}/8}<C\epsilon_{n}/\delta_{n}^{\nu}
\]

Since :
\[
\sum_{k=1}^{\infty}\hat{\epsilon}_{k}<\infty
\]

Then :

\[
\sum_{n=1}^{\infty}|\psi_{n}-\psi_{n-1}|_{3\rho/8,\,3\Delta/8}<\infty
\]

So the sequence $\psi_{n}$ is convergent to a limit $\psi_{\epsilon}$
symplectic from $A_{3\rho/8,3\Delta/8}$ and moreover since $H_{\epsilon}\circ\psi_{n_{k}}=H_{n_{k}}$,
we get that : $H_{\epsilon}\circ\psi_{\epsilon}\,={}^{t}\omega.r+O(r^{2})$.

Now we have : 
\[
|Id-\psi_{\epsilon}|_{3\rho/8,\,3\Delta/8}\leq|\sum_{n=1}^{\infty}|\psi_{n}-\psi_{n-1}|_{3\rho/8,\,3\Delta/8}\leq\sum_{n=0}^{\infty}C\epsilon_{n+1}/\delta_{n+1}^{\nu}\leq C\epsilon_{0}
\]
 So if $\epsilon_{0}$ is sufficiently small, by the Cauchy inequality
$|I-\mathrm{d}\psi_{\epsilon}|_{\rho/3,\Delta/3}<1/2$ therefore $\psi_{\epsilon}$
is injective and is a symplectic diffeomorphism from $A_{\rho/3,\Delta/3}$
onto its image.
\end{proof}

\section{References}
\bibliographystyle{amsalpha}
\bibliography{biblio.bib}

\providecommand{\bysame}{\leavevmode\hbox to3em{\hrulefill}\thinspace}
\providecommand{\MR}{\relax\ifhmode\unskip\space\fi MR }
% \MRhref is called by the amsart/book/proc definition of \MR.
\providecommand{\MRhref}[2]{%
  \href{http://www.ams.org/mathscinet-getitem?mr=#1}{#2}
}
\providecommand{\href}[2]{#2}
\begin{thebibliography}{Arn}

\bibitem[Arn]{ARN}
V.I Arnold, \emph{Proof of a theorem by a. n. kolmogorov on the invariance of quasi-periodic motions under small perturbations of the hamiltonian}, Uspehi Mat. Nauk, vol. 18, no. 5(113), pp. 13–40.

\bibitem[Kol]{KOL}
A.N. Kolmogorov, \emph{On the conservation of conditionally periodic motions under small perturbation of the hamiltonian}, Dokl. akad. nauk SSSR, vol. 98, pp. 527–530.

\end{thebibliography}

\end{document}